\documentclass[pdflatex,sn-mathphys-num]{sn-jnl}

\usepackage{graphicx}%
\usepackage{multirow}%
\usepackage{amsmath,amssymb,amsfonts}%
\usepackage{amsthm}%
\usepackage{mathrsfs}%
\usepackage[title]{appendix}%
\usepackage{xcolor}%
\usepackage{textcomp}%
\usepackage{manyfoot}%
\usepackage{booktabs}%
\usepackage{algorithm}%
\usepackage{algorithmicx}%
\usepackage{algpseudocode}%
\usepackage{listings}%

\theoremstyle{thmstyleone}%
\newtheorem{theorem}{Theorem}
\newtheorem{proposition}[theorem]{Proposition}%
\newtheorem{lemma}[theorem]{Lemma}%
\newtheorem{fact}{Fact}%

\theoremstyle{thmstyletwo}%
\newtheorem{example}{Example}%
\newtheorem{remark}{Remark}%

\theoremstyle{thmstylethree}%
\newtheorem{definition}{Definition}%

\DeclareMathOperator{\PSpec}{PSpec}
\newcommand{\id}{\mathrm{id}}
\newcommand{\A}{\mathcal{A}}
\newcommand{\B}{\mathcal{B}}
\newcommand{\G}{\mathcal{G}}
\newcommand{\C}{\mathbf{C}}
\newcommand{\I}{\mathbf{I}}
\newcommand{\midd}{\; \middle| \;}

\usepackage{enumerate}
\usepackage{faktor}

\begin{document}

\title[On semigroups and groupoids with minimal probabilistic spectrum]{On semigroups and groupoids with minimal probabilistic spectrum}


\author*[ ]{\fnm{Carles} \sur{Card\'o}}\email{ccardo@uic.es}



\affil*[ ]{\orgdiv{Departament de Medicina, Àrea d’Estadística, Salut Pública i
Epidemiologia}, \orgname{Universitat Internacional de Catalunya}, \orgaddress{\street{Josep Trueta s/n}, \city{Sant Cugat del Vallès}, \postcode{08195}, \state{Catalunya}, \country{Spain}}}




\abstract{The equational probabilistic spectrum of a finite algebra is the set of probabilities with which equations are satisfied in the algebra. We study algebras with minimal spectrum, that is, spectra consisting only of the values $1$ and $1/|\A|$. 
We show that, apart from trivial cases, groupoids with minimal spectrum are quasigroups. We further prove that several weak associativity conditions collapse into full associativity, and hence into group structure. 
Finally, we obtain a complete classification of semigroups with minimal spectrum.}


\keywords{probabilistic spectrum, finite algebra, groupoid, semigroup, quasigroup}



\maketitle

\section{Introduction}\label{sec1}

Probabilistic methods have long been used to study structural properties of algebraic structures. 
A classical example in group theory concerns the probability that two randomly chosen elements 
of a finite group $G$ commute. This probability equals $k(G)/|G|$, where $k(G)$ denotes the 
number of conjugacy classes of $G$ \cite{Gustafson1973, MacHale1974}. In particular, if $G$ 
is non-abelian, this probability is at most $5/8$. Probabilistic and asymptotic counting 
techniques were introduced into group theory by P. Erd\H{o}s and P. Turán in the 1960s 
\cite{ET1, ET2, ET3, ET4, ET6, ET7}.

Extensions of this notion to infinite groups, semigroups, and other algebraic structures 
have been studied by various authors \cite{Joseph1969, Laffey1976, Joseph1977, Eberhard2015, 
Ponomarenko2012, Dixon2002, Abert2003}. In the framework of universal algebra, equational probability can be defined as the proportion of tuples that satisfy an equation in a finite algebra. 
If we fix a finite algebra $\A$ and consider all its equations, we obtain its probabilistic spectrum 
$\PSpec(\A)$. In some algebras this spectrum is dense in the unit interval, while in others it is minimal, consisting only of the values $1$ and $1/|\A|$.

In a complementary work \cite{Cardo2026}, we study the relation between the spectrum and 
primality, where the spectrum typically turns out to be dense. In the present paper we 
focus on the opposite situation, namely when the spectrum is minimal, with particular 
attention to groupoids and semigroups. We prove that, apart from trivial cases, such 
groupoids are quasigroups, and we obtain a complete classification of semigroups with 
minimal spectrum. We also show that several weak forms of associativity collapse into full associativity under this condition. These results show that minimality of the spectrum imposes strong structural constraints on algebraic structures.

\section{The equational probabilistic spectrum of an algebra} \label{EspectreProbabilistic}

We denote an algebra by $\A=(A, F)$ where $A$ is the universe of the algebra and $F$ is a set of basic operations. All algebras considered here are finite. For definitions of common algebras and basic terminology, we refer to \cite{Burris2012,Howie1995}. The remaining notions will be introduced as needed throughout the text.

By an \emph{equation} we mean a pair of terms $(t,t')$, which we write as $t\approx t'$. The number of variables in an equation is the number of distinct variables involved. Given an equation $t\approx t'$ with $k$ variables in an algebra $\A$, we write the set of solutions as
$$\{t \approx t'\}_{\A}= \{ \vec{x} \in A^k \mid t(\vec{x})=t'(\vec{x})\}.$$

\begin{definition} We define the \emph{probability of the equation $t\approx t'$ in the finite algebra $\A$} as the fraction
$$\Pr (t\approx t' \mid \A)=\frac{|\{t \approx t'\}_\A|}{|A|^k}.$$
We call the following set the \emph{equational probability spectrum} of the algebra $\A$
$$\PSpec(\A)=\{ \Pr (t\approx t' \mid \A) \mid t\approx t' \mbox{ is an equation }\}.$$
\end{definition}

We will only deal with finite algebras, so the previous quotient always makes sense. Consider, for example, the probability that two elements commute within the dihedral group $\mathcal{D}_4$ of order eight,
$$\Pr(xy\approx yx \mid \mathcal{D}_4)=\frac{5}{8}.$$
As another example, the probability that two subsets of a set $U$ with $m$ elements have empty intersection is
$$\Pr(x \cap y \approx \emptyset \mid 2^U)=\left(\frac{3}{4}\right)^m.$$

The following property motivates the present article.
\begin{fact} The spectrum of an algebra $\A$ always contains the two values $1$ and $1/|A|$.
\end{fact}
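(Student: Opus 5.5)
Both values come from exhibiting explicit equations, and the construction uses only terms that exist in every algebra, whatever its signature. The universal terms available are the variables themselves, so the witnessing equations should involve variables only.

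\textbf{The value $1$.} I would take the trivial equation $x\approx x$, which has one variable. Every $a\in A$ satisfies it, so $|\{x\approx x\}_\A|=|A|$ and $\Pr(x\approx x\mid \A)=|A|/|A|=1$.

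\textbf{The value $1/|A|$.} I would take $x\approx y$ with two distinct variables, so $k=2$. The solution set is the diagonal $\{(a,a)\mid a\in A\}$, which has $|A|$ elements. Hence $\Pr(x\approx y\mid\A)=|A|/|A|^2=1/|A|$.

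The only point needing care is that the definition of $\PSpec$ allows equations made of bare variables. Since variables are terms in any signature, and $A$ is finite and nonempty (as algebras are assumed to be), both values are well defined and belong to $\PSpec(\A)$. If $|A|=1$ the two values coincide, which causes no problem. I expect no real obstacle here.
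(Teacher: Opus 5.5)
Your proof is correct and is essentially the paper's: it uses exactly the witnessing equations $x\approx x$ (probability $1$) and $x\approx y$ (probability $|A|/|A|^2=1/|A|$). You also spell out the solution-set counts and note that bare variables are terms in every signature, details the paper leaves implicit.
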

\begin{proof} Indeed, $\Pr(x\approx x \mid \A)=1$ and $\Pr(x\approx y \mid \A)=1/|A|$.
\end{proof}

Many algebras exhibit only these two probability values. We will say that they are algebras with \emph{minimal spectrum}. We will investigate this question in the case of groupoids, but some results can be generalized naturally to other signatures.

\begin{example} The following are examples of well-known probabilistic spectra. For details of the calculations, see \cite{Cardo2026}.

\begin{enumerate}[(i)]
\item Let $\mathcal{P}_n=(\{1, \ldots, n\}, *)$ equipped with the projection operation $x*y=x$.
$$\PSpec(\mathcal{P}_n)=\left\{ \frac{1}{n},1 \right\}.$$

\item Let $\mathcal{S}_2= (\{0,1\}, |)$ equipped with the Sheffer bar operation $|$. Due to the functional completeness of this operation, we obtain:
$$ \PSpec(\mathcal{S}_2) =\left\{\, \frac{s}{2^k} \midd 0 \le s \le 2^k,\; k \ge 0 \,\right\}.$$

\item More generally, for every primal algebra, the spectrum is the set of $n$-adic numbers, where $n$ is the order of the algebra.

\item Let $\mathcal{C}_2=(\{0,1\}, \wedge)$ be the semilattice, where $\wedge$ is the ``and'' operation.
$$\PSpec(\mathcal{C}_2)=\left\{1-\frac{2^p+2^q-2}{2^{p+q+r}} \midd p,q,r \geq 0\right\}.$$

\item We will see later in Lemma~\ref{LemmaGrupAb3} that if $p$ is a prime number
$$\PSpec(\mathbb{Z}_p)=\left\{ \frac{1}{p},1 \right\},$$
and hence the spectrum is minimal.
\end{enumerate}

\end{example}

We will also need the following basic properties. An algebra $\A$ is said to be \emph{derived from the algebra $\B$} if both have the same universe and the operations of $\A$ are terms of $\B$. We say that $\A$ is a \emph{reduct} of $\B$ if $\A$ is obtained by removing some operations from the signature of $\B$.

\begin{proposition} Let $\A$ and $\B$ be algebras.
\begin{enumerate}[(i)]
\item $\Pr( t\approx t' \mid \A\times \B)=\Pr( t\approx t' \mid \A)\cdot \Pr( t\approx t' \mid \B)$, where both algebras have the same signature.
\item $\PSpec(\A \times \B) \subseteq\PSpec(\A)\cdot \PSpec(\B)$, where the product of sets is taken pointwise and where the two algebras have the same signature.
\item $\PSpec(\A^m)=\{\alpha^m \mid \alpha \in \PSpec(\A)\}$, for any $m\geq 1$.
\item If $\A$ is derived from $\B$, then $\PSpec (\A) \subseteq \PSpec (\B)$.
\item If $\A$ is a reduct of $\B$, then $\PSpec (\A) \subseteq \PSpec (\B)$.
\item Two isomorphic or anti-isomorphic groupoids have the same spectrum.
\end{enumerate}
\end{proposition}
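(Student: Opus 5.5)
The plan is to prove each item directly from the definition of $\Pr$, treating terms as term functions. In every item the key observation is that a term evaluated in a compound structure is computed from the term functions of its components.

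For (i), I use that operations in $\A\times\B$ act coordinatewise. By induction on the structure of $t$, the term function satisfies $t^{\A\times\B}((\vec a,\vec b))=(t^{\A}(\vec a),t^{\B}(\vec b))$. Here a $k$-tuple of pairs is identified with a pair of $k$-tuples. Hence a tuple solves $t\approx t'$ in $\A\times\B$ iff both coordinate tuples are solutions, and
$$\{t\approx t'\}_{\A\times\B}\cong \{t\approx t'\}_{\A}\times\{t\approx t'\}_{\B}.$$
Dividing by $|A\times B|^k=|A|^k|B|^k$ gives the product formula. The only care needed is that $k$ counts the variables occurring in the equation, which is the same $k$ in all three algebras.

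Item (ii) is immediate from (i): every value $\Pr(t\approx t'\mid \A\times\B)$ is the product of a value in $\PSpec(\A)$ and one in $\PSpec(\B)$.

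For (iii), iterating (i) gives $\Pr(t\approx t'\mid\A^m)=\Pr(t\approx t'\mid\A)^m$ for every equation. Both inclusions follow because the same equation is used on both sides. Each element of $\PSpec(\A^m)$ is of the form $\alpha^m$ with $\alpha\in\PSpec(\A)$. Conversely, each $\alpha^m$ is realized by the equation witnessing $\alpha$. This equality, rather than mere inclusion, is why (iii) is sharper than (ii).

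For (iv), let $\A$ be derived from $\B$ and take an equation $t\approx t'$ in the signature of $\A$. Replace each basic operation symbol of $\A$ by the $\B$-term defining it, and perform this substitution recursively. This yields $\B$-terms $s,s'$ with $t^{\A}=s^{\B}$ and $t'^{\A}=s'^{\B}$ as functions on $A^k=B^k$, which is verified by induction on terms. The substitution does not introduce new variables, since the defining terms are instantiated at subterms. However, it might delete variables, for instance if an operation is a projection. I expect this variable-count issue to be the main subtlety. To handle it, I will note that the probability is unchanged if we regard the equation as depending on a superset of its variables, since the dummy coordinates factor out as $|A|^{j}/|A|^{j}$. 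Hence the solution fractions agree and $\Pr(t\approx t'\mid\A)=\Pr(s\approx s'\mid\B)$.

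Item (v) is the special case of (iv) in which the defining terms are the basic operations themselves.

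For (vi), isomorphic groupoids clearly have identical solution counts for every equation, transported by the bijection. For an anti-isomorphism $\varphi:\A\to\B$, with $\varphi(xy)=\varphi(y)\varphi(x)$, define the mirror $\bar t$ of a term by swapping the arguments at every node. Induction then gives $\varphi(t^{\A}(\vec x))=\bar t^{\B}(\varphi\vec x)$. Thus $\varphi$ maps $\{t\approx t'\}_\A$ bijectively onto $\{\bar t\approx\bar t'\}_\B$, with the same variables. Since mirroring is an involution on equations, the two spectra coincide.
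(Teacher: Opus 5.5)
Your proposal is correct and follows the same route as the paper: the coordinatewise bijection of solution sets for (i)--(iii), translating each equation of $\A$ into an equation of $\B$ for (iv)--(v), and mirrored anti-terms for (vi). Your version is simply more explicit, for example in the induction on terms and in the remark that variables deleted by the substitution in (iv) do not change the probability, a point the paper passes over without comment.
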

\begin{proof}\begin{enumerate}[(i)]
\item We note that the set $\{t\approx t'\}_\A\times \{t\approx t'\}_\B$ and the set $\{t\approx t'\}_{\A\times \B}$ are in bijection. So,
\begin{align*}
\Pr( t\approx t' \mid \A\times \B) &=\frac{|\{t\approx t'\}_{\A\times \B}|}{|A\times B|^k}=\frac{|\{t\approx t'\}_\A| \cdot | \{t\approx t'\}_\B| }{|A|^k\cdot|B|^k}\\
&=\Pr( t\approx t' \mid \A)\cdot \Pr( t\approx t' \mid \B)
\end{align*}
\item Each probability of the spectrum of $\A\times \B$ comes from an equation, and therefore by (i), it can be decomposed as the product of a probability of the spectrum of $\A$ and one of those of $\B$.
\item The same reasoning as in (ii), but since the algebras are equal and the same equation is considered, $\alpha$ is the value of a probability in $\A$ if and only if $\alpha^m$ is for $\A^m$.
\item Any equation of $\A$ can be expressed in terms of an equation of $\B$.
\item Since $\A$ is a reduct of $\B$, every equation of $\A$ is an equation of $\B$.
\item If two groupoids are isomorphic, the result follows immediately. Suppose they are anti-isomorphic. Given a term $t$, we can define its \emph{anti-term} $\bar{t}$, reversing the order of the operands. If $\bar{\A}$ is an anti-isomorphic groupoid to $\A$, then $\Pr(t\approx t'\mid \A)=\Pr(\bar{t}\approx \bar{t}'\mid \bar{\A})$. Since there is a bijection between the equations of $\A$ and those of $\bar{\A}$, we have that $\PSpec(\A)=\PSpec(\bar{\A})$.
\end{enumerate}
\end{proof}

\begin{remark} Note that if the semigroup $\mathcal{G}'=(G, \cdot)$ is the reduct of a group $\mathcal{G}=(G, 1,(\cdot)^{-1},\cdot)$, then their spectra are equal.
On the one hand, we clearly have $\PSpec(\mathcal{G}') \subseteq \PSpec(\mathcal{G})$. On the other hand, if $w \approx w'$ is an equation in the group signature, we can eliminate any occurrence of the unit element $1$, unless $w=1$ or $w'=1$. In this case, we introduce a new variable $y$ and replace the equation by $wy\approx w'y$. Since we only consider finite groups, we can replace each inverse $x^{-1}$ by $x^{n-1}$, where $n$ is the order of $\G$.

Therefore, when dealing with semigroups arising from groups, it suffices to consider equations of the form $t\approx 1$ without loss of generality.
\end{remark}

\section{Minimality of the spectrum and quasigroups} \label{MinSpec}
An \emph{identity} is an equation that holds universally in an algebra, that is, $\mathcal{A}\models \forall \vec{x}\,\, (t\approx t')$. A \emph{quasi-identity} is a formula of the form $\forall \vec{x}\,\, (t\approx t' \rightarrow s\approx s')$.

Given an equation $t\approx t'$ involving at least the variables $x$ and $y$, 
the equation resulting from identifying $x$ with $y$ is called a \emph{specialization} and we denote it by
$$\faktor{t\approx t'}{x\approx y}\,\,.$$
For example, in a groupoid we can write
$$\faktor{(xy\approx zy)}{(x\approx y)} =( x^2\approx zx).$$
The following Lemma is crucial for the rest of the article.
\begin{lemma} \label{LemmaIdentEq} Let $\A$ be an algebra with minimal spectrum. Suppose that $\A$ satisfies the identity $\faktor{t\approx t'}{x\approx y}$. Then, either
\begin{enumerate}[(i)]
\item $\A$ satisfies the identity $t\approx t'$
\item or $\A$ satisfies the quasi-identity $t\approx t' \rightarrow x\approx y$.
\end{enumerate}
\end{lemma}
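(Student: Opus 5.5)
The plan is a counting argument on the solution set of $t\approx t'$. Let $k$ be the number of variables of $t\approx t'$, with $x,y$ among them, and write $n=|A|$. By minimality of the spectrum, $\Pr(t\approx t'\mid \A)\in\{1,1/n\}$. If it equals $1$, then $\A$ satisfies the identity $t\approx t'$, which is alternative (i). So assume from now on that the probability is $1/n$, that is, $|\{t\approx t'\}_\A| = n^{k-1}$. In this case I aim to prove (ii).

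The key step is to locate the diagonal tuples inside the solution set. Consider the set $D=\{\vec{a}\in A^k \mid a_x=a_y\}$, where $a_x,a_y$ denote the coordinates corresponding to the variables $x$ and $y$. Clearly $|D|=n^{k-1}$. A tuple in $D$ is determined by a tuple of values for the $k-1$ variables of the specialization $\faktor{t\approx t'}{x\approx y}$, with the common value assigned to $x=y$. Under this correspondence, evaluating $t$ and $t'$ at $\vec a\in D$ is the same as evaluating the specialized terms at the corresponding $(k-1)$-tuple. Since the specialization is an identity of $\A$, every tuple of $D$ is a solution of $t\approx t'$, so $D\subseteq \{t\approx t'\}_\A$. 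The two sets have the same cardinality $n^{k-1}$, hence they are equal. Consequently every solution of $t\approx t'$ satisfies $a_x=a_y$, which is exactly the quasi-identity $t\approx t'\rightarrow x\approx y$.

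The only delicate point is bookkeeping about variables, and I would check it explicitly. First, the specialization must have exactly $k-1$ distinct variables, which holds since $x$ and $y$ are distinct and both occur in $t\approx t'$. Second, the bijection between $D$ and $A^{k-1}$ must be compatible with term evaluation; this is a routine induction on terms. I do not expect a real obstacle here. The whole argument rests on the cardinality coincidence $|D| = n^{k-1} = |A|^k\cdot\frac1{n}$, which forces containment to become equality.
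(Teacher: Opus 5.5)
Your proof is correct and is essentially the paper's argument: both rest on the diagonal set $\{\vec a\in A^k \mid a_x=a_y\}$, of size $n^{k-1}$, being contained in $\{t\approx t'\}_\A$ because the specialization is an identity. The paper argues contrapositively (a solution with $x\neq y$ pushes the probability above $1/n$, hence to $1$), whereas you split on the two possible values and use the cardinality match to turn containment into equality; this is the same counting arranged differently.
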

\begin{proof} Let $k$ be the number of variables of $t\approx t'$ and $n$ the order of the algebra $\A$. Let us assume, without loss of generality, that $x=x_{k-1}$ and $y=x_k$; otherwise, we simply rearrange the components. First, we note that there is an injective mapping,
\begin{align*}
\left\{ (x_1, \ldots, x_{k-1}) \in A^{k-1}\midd\faktor{t\approx t'}{x\approx y} \right\} &\longrightarrow \{ (x_1, \ldots, x_{k-1}, x_k)\in A^k\mid t\approx t'\}\\
(x_1, \ldots, x_{k-1}) & \longmapsto (x_1, \ldots , x_{k-1}, x_{k-1})
\end{align*}
On the one hand, this function is well defined, since the solutions of the equation $\faktor{t\approx t'}{x\approx y}$ are precisely the solutions of $t\approx t'$ that satisfy $x=y$, that is, $x_{k-1}=x_k$. On the other hand, it is clear that the function is injective. Hence, we obtain the inequality:
$$|\{ t\approx t'\}_\A| \geq \left| \left\{ \faktor{t\approx t'}{x\approx y}\right\}_\A\right|. $$
Suppose that (ii) does not hold. Then (i) must hold. Suppose that (ii) does not hold. Then there exists a tuple $\vec{a}\in A^k$ that satisfies $t\approx t'$, but such that its component $x$ and its component $y$ are different. Therefore, we have that
$$|\{ t\approx t'\}_\A| > \left| \left\{ \faktor{t\approx t'}{x\approx y}\right\}_\A\right|. $$
We divide by $n^k$ and we obtain
$$\frac{|\{ t\approx t'\}_\A|}{n^k}> \frac{\left| \left\{\faktor{t\approx t'}{x\approx y}\right\}_\A\right|}{n^k}=\frac{1}{n}\frac{\left| \left\{ \faktor{t\approx t'}{x\approx y}\right\}_\A\right|}{n^{k-1}}. $$
Therefore,
$$\Pr(t\approx t' \mid \A)> \frac{1}{n} \Pr\left(\faktor{t\approx t'}{x\approx y} \midd \A\right).$$ The algebra  $\A$ satisfies universally the identity $\faktor{t\approx t'}{x\approx y} $, that is,
$$\Pr\left(\faktor{t\approx t'}{x\approx y} \midd \A\right)=1.$$ 
Thus, 
$$\Pr(t\approx t' \mid \A)> \frac{1}{n}.$$
However, since $\A$ has minimal spectrum, the probability values must be either $1/n$ or $1$. Therefore,
$$\Pr(t\approx t' \mid \A)=1.$$
In other words, $\A$ satisfies universally $t\approx t'$. 
\end{proof}
  
In group theory, the \emph{cycle type} of a permutation is the vector of the lengths of each cycle into which it decomposes. We say that a permutation is \emph{isocyclic} if its cycle type is $(1, a,\ldots, a)$. Note that there is at least one fixed point. In fact, a more accurate name would be quasi-isocyclic. However, for brevity, we will call it isocyclic. We say that a mono-unary algebra is isocyclic if its operation is an isocyclic permutation.

\begin{theorem}\label{TheoMonoUnary} A monounary algebra has minimal spectrum if and only if it is constant or isocyclic. Two isocyclic monounary algebras are isomorphic if and only if they have the same cycle type.
\end{theorem}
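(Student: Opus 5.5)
The plan is to classify the possible operations $f$ of a monounary algebra $\A=(A,f)$ with $|A|=n$ by computing the probabilities of a few very simple equations. Two observations reduce the problem. First, every term in one variable has the form $f^i(x)$. Second, every equation therefore has one of the forms $f^i(x)\approx f^j(x)$ or $f^i(x)\approx f^j(y)$. I read ``constant'' as meaning that $f$ is a constant map.

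For sufficiency, suppose first that $f$ is constant with value $c$. Then $f^i(x)\approx f^j(y)$ has probability $1$ when $i,j\ge 1$, and probability $1/n$ when exactly one of $i,j$ is $0$, since the equation forces that variable to equal $c$. The remaining cases are $x\approx y$ and $x\approx x$, with probabilities $1/n$ and $1$. Similarly, $f^i(x)\approx f^j(x)$ has probability $1$ or $1/n$.

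Next suppose $f$ is isocyclic of type $(1,a,\ldots,a)$, with fixed point $e$. Because $f$ is bijective, $f^i(x)=f^j(y)$ holds iff $y=f^{i-j}(x)$, where exponents are read modulo the order of $f$. This determines $y$ uniquely, so the probability is exactly $1/n$. For one-variable equations, $f^i(x)=f^j(x)$ holds iff $x=e$ or $a\mid i-j$. Hence the probability is $1$ when $a\mid i-j$ and $1/n$ otherwise. This case also covers the identity permutation, $a=1$.

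For necessity, I first use the equation $f(x)\approx f(y)$. If $k_1,\dots,k_r$ are the sizes of the nonempty fibres of $f$, its probability is $\sum k_i^2/n^2$. Since $\sum k_i=n$, this sum equals $n$ iff all $k_i=1$, and equals $n^2$ iff $r=1$. Minimality therefore forces $f$ to be either a permutation or constant.

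In the permutation case, I consider the equations $f^k(x)\approx x$ for $k\ge1$. Each has probability $\mathrm{Fix}(f^k)/n$, so every power $f^k$ is either the identity or has exactly one fixed point. Taking $k=1$, either $f=\id$, which is isocyclic, or $f$ has a unique fixed point $e$. In the latter case, let $\ell$ be the length of any nontrivial cycle. Then $f^\ell$ fixes $e$ and every point of that cycle, so it has at least two fixed points and must be the identity. Consequently every cycle length divides $\ell$. Applying the same argument with the roles of any two nontrivial cycles exchanged, all nontrivial cycles have the same length $a$, so the cycle type is $(1,a,\ldots,a)$.

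For the isomorphism statement, an isomorphism of monounary algebras $(A,f)\to(B,g)$ is a bijection $\varphi$ with $\varphi f=g\varphi$. When $A=B$, this is exactly conjugacy of permutations, and in general $\varphi$ maps cycles of $f$ onto cycles of $g$ of the same length. Therefore isomorphic isocyclic algebras have the same cycle type. Conversely, given the same cycle type, I construct $\varphi$ by sending the fixed point to the fixed point and matching the cycles one by one, aligning a chosen starting point in each pair.

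I expect the main obstacle to be the necessity step forcing all nontrivial cycles to have equal length. The key is choosing the right equations, $f^k(x)\approx x$, and noting that the probability value $0$ is excluded as well as intermediate values, so every power of $f$ has at least one fixed point.
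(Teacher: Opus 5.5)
Your proof is correct and follows the same overall plan as the paper. In both, $f(x)\approx f(y)$ forces ``constant or bijective'', the equations $f^k(x)\approx x$ then pin down the cycle type, and isomorphism reduces to matching cycles. Several steps are carried out differently, however.

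For the first dichotomy, the paper applies Lemma~\ref{LemmaIdentEq}, since $f(x)\approx f(y)$ specializes to a trivial identity. You instead compute $\Pr(f(x)\approx f(y)\mid\A)=\sum_i k_i^2/n^2$ from the fibre sizes and read off the two extreme values. Your route is self-contained; the paper's reuses a general tool that it needs again for $xy\approx xz$ in Theorem~\ref{TheoClasGen}.

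For the cycle structure, the paper takes two nontrivial cycle lengths $a<b$ and traps the number of solutions of $f^a(x)\approx x$ strictly between $1+a$ and $n-b$. You argue that each $f^\ell$ has at least two fixed points, so it must be the identity, which forces all nontrivial lengths to divide one another. This is an equivalent use of the same equations.

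Your sufficiency direction is more complete than the paper's. The paper only treats the one-variable equations $f^r(x)\approx f^s(x)$ for isocyclic $f$ and calls the constant case easy. You also verify the two-variable equations $f^i(x)\approx f^j(y)$, which must be checked for the spectrum to be exactly $\{1/n,1\}$.

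Likewise, your cycle-matching argument handles isomorphisms between algebras on different universes directly. The paper phrases this step only as conjugacy of permutations of a single set.
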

\begin{proof} Consider the first statement. $(\Rightarrow)$ Let $\mathcal{A}=(A,f)$ be a monounary algebra of order $n$. We will assume that $n>1$, since the case $n=1$ is trivial. Consider the equation $f(x)\approx f(y)$. We have that
$$\faktor{(f(x) \approx f(y))}{(x\approx y)}=(f(x)\approx f(x)),$$
which is an identity that is always satisfied in any monounary algebra. By applying Lemma~\ref{LemmaIdentEq}, either $f(x)=f(y)$ for all $x,y\in A$, or if $f(x)=f(y)$, then $x=y$. In the first case, $f$ must necessarily be constant. In the second case, $f$ must be injective. Since $\A$ is finite, injectivity implies bijectivity.
Moreover, the number of fixed points of $f$ must be either $1$ or $n$. This follows from the fact that the probability $\Pr(f(x)\approx x \mid \A)=F/n$, where $F$ is the number of fixed points. Since the probability must be $1$ or $1/n$, we have that $F=1$ or $F=n$.

Now suppose that $n=2$. There is at least one fixed point, and therefore $f$ must be the identity. Therefore suppose that $n\geq 3$ and that $f$ decomposes into at least three cycles of length $1$, $a$ and $b$ with $1< a <b$, which we denote by $C_1$, $C_a$ and $C_b$. Consider the equation $f^a(x)\approx x$. We have that the number of solutions is greater or equal to $1+a$: one solution is the fixed point of $C_1$, and the remaining $a$ solutions correspond to the elements of the cycle $C_a$. On the other hand, if $b>a$, no point of the cycle $C_b$ can be a solution of the equation. Thus, the number of solutions of $f^a(x)\approx x$ is smaller than or equal to $n-b$. This means that the probability of this equation is bounded as
$$\frac{1}{n}<\frac{1+a}{n}\leq \Pr(f^a(x) \approx x \mid \A)\leq \frac{n-b}{n}< 1.$$
By the minimality of the spectrum, this is impossible. Therefore, all the cycles into which $f$ decomposes and that are not fixed points must have the same length.

$(\Leftarrow )$ It is easy to see that if $f$ is constant it has minimal spectrum. Suppose $f$ is a permutation with cycle type $(1,a, \ldots, a)$. Since $f$ is bijective, every equation $f^r(x)\approx f^s(x)$, with $r\geq s$ can be written as $f^{r-s}(x)\approx x$. If $a$ divides $r-s$, then $\Pr(f^{r-s}(x)\approx x \mid \A)=1$, while if it does not divide $\Pr(f^{r-s}(x)\approx x \mid \A)=1/n$ or equals $1$, depending on whether we have a single fixed point or all the elements are fixed points, that is, $f=\id$.

We now prove the second statement. Let $(A,f)$ and $(A,g)$ be two isocyclic algebras and let $h:A\longrightarrow A$ be an isomorphism. Then $h(f(x))=g(h(x))$ for all $x\in A$. That is, $f$ and $g$ are conjugate permutations, which is equivalent to saying that $f$ and $g$ have the same type of cycle, see \cite{Humphreys1996} for a proof of this elementary fact.
\end{proof}

We introduce some terminology and notation. All groupoids with constant operation of order $n$ are isomorphic. Let's denote them by $\C_n$. We will say that a groupoid is \emph{right isocyclic} (left isocyclic) if $xy=f(x)$ ($xy=f(y)$) where $f$ is an isocyclic permutation. We will say that a groupoid is isocyclic if it is either right or left isocyclic.
Two left isocyclic groupoids, with operations $x*y=f(x)$ and $x*_g y=g(y)$, are isomorphic if and only if $f$ and $g$ are conjugate permutations. Therefore, to characterize a left isocyclic groupoid, it suffices to specify the cycle type $(1,a,\ldots, a)$. Since the order $n$ of the groupoid must satisfy $n=sa+1$ where $s$ is the number of cycles, a left isocyclic groupoid is determined by the order and length of the cycle $a$. We will denote this groupoid by $\I_n^{a-}$. The same applies to right isocyclic groupoids, which we will denote by $\I_n^{a+}$.

\begin{theorem} \label{TheoClasGen} If a groupoid of order $n$ has minimal spectrum, then it is isomorphic to $\C_n$, $\I_n^{a-}$ or $\I_n^{a+}$, for some divisor $a$ of $n-1$, or it is a quasigroup.
\end{theorem}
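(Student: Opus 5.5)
The plan is to apply Lemma~\ref{LemmaIdentEq} to the two equations that express cancellation, and then reduce the non-cancellative cases to Theorem~\ref{TheoMonoUnary}.

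First, consider the equation $xz\approx yz$ in a groupoid $\A$ of order $n$ with minimal spectrum. Its specialization is
$$\faktor{(xz\approx yz)}{(x\approx y)}=(xz\approx xz),$$
which is an identity. By Lemma~\ref{LemmaIdentEq}, one of two things holds.
\begin{enumerate}[(a)]
\item $\A\models xz\approx yz$. Then the product does not depend on its first argument, so $xy=g(y)$ for some unary map $g$.
\item $\A\models xz\approx yz\rightarrow x\approx y$. Then every right translation $x\mapsto xz$ is injective, hence bijective because $A$ is finite.
\end{enumerate}
Symmetrically, applying the lemma to $zx\approx zy$ gives either (a$'$) $xy=f(x)$ for some unary $f$, or (b$'$) every left translation is a bijection.

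Next we go through the combinations. If (b) and (b$'$) both hold, every translation is bijective, so $\A$ is a quasigroup. Otherwise (a) or (a$'$) holds; by Proposition (vi) (anti-isomorphism) it suffices to treat (a), namely $xy=g(y)$.

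In case (a), the groupoid $(A,\cdot)$ is derived from the monounary algebra $(A,g)$. Conversely, $(A,g)$ is derived from $(A,\cdot)$, since $g(x)=xx$. By Proposition (iv), the two algebras therefore have the same spectrum, so $(A,g)$ has minimal spectrum. Theorem~\ref{TheoMonoUnary} then says that $g$ is constant or isocyclic of cycle type $(1,a,\ldots,a)$.
\begin{itemize}
\item If $g$ is constant, $\A\cong\C_n$.
\item If $g$ is isocyclic, $\A$ is left isocyclic, i.e.\ $\A\cong\I_n^{a-}$. Since $n=sa+1$, where $s$ is the number of nontrivial cycles, $a$ divides $n-1$. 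The case $g=\id$ is included by taking $a=1$, which agrees with the convention that the cycle type records the fixed points.
\end{itemize}
Case (a$'$) gives, in the same way, $\C_n$ or $\I_n^{a+}$. Overlapping cases, such as (a) together with (a$'$), force a constant operation and are already covered. The trivial order $n=1$ is both $\C_1$ and a quasigroup.

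The only step needing care is the transfer of minimality between the groupoid and the monounary algebra, i.e.\ checking mutual derivability so that Proposition (iv) applies in both directions. Everything else is a direct case split on the outputs of Lemma~\ref{LemmaIdentEq}.
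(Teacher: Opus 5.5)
Your proof is correct and has the same skeleton as the paper's. Both apply Lemma~\ref{LemmaIdentEq} to the two cancellation equations and get a quasigroup when both cancellation laws hold.

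The difference is in how you handle the non-cancellative case:
\begin{itemize}
\item The paper splits into four combinations. For (i) with (i)$'$ it derives a constant operation directly. For (i) with (ii)$'$ it uses right cancellation to make $f$ injective, then re-runs the fixed-point and cycle-length argument of Theorem~\ref{TheoMonoUnary} on the equations $f^m(x)\approx x$.
\item You note that $(A,g)$ is derived from $(A,\cdot)$ via $g(x)=xx$, so it inherits minimal spectrum, and you cite Theorem~\ref{TheoMonoUnary} as a black box. That theorem's own constant-versus-injective dichotomy does the work. Case (a) alone then suffices, whichever of (a$'$) or (b$'$) holds, so the mixed-case pairing is unnecessary.
\end{itemize}

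This is slightly cleaner and more rigorous than the paper's appeal to ``the same arguments as in Theorem~\ref{TheoMonoUnary}''. It is exactly the derivability observation the paper itself makes just before Proposition~\ref{PropoProp}.

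One small point: you only need the inclusion $\PSpec(A,g)\subseteq\PSpec(A,\cdot)=\{1/n,1\}$, together with the Fact that $1$ and $1/n$ always lie in the spectrum. Mutual derivability and equality of the two spectra are not required.
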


\begin{proof} We begin by considering the equation $xy\approx xz$. We have that
$$\faktor{(xy\approx xz)}{(y\approx z)}=(xy\approx xy),$$
which is an identity that holds in any algebra. Therefore, by Lemma~\ref{LemmaIdentEq} an algebra $\A$ with minimal spectrum will satisfy one of the two formulas
\begin{enumerate}[(i)]
\item $\forall x \forall y \quad xy\approx xz$,
\item $\forall x \forall y \quad xy\approx xz \rightarrow y\approx z$.
\end{enumerate}
If we consider the symmetric equation $yx\approx zx$, $\A$ will satisfy one of the two formulas
\begin{enumerate}
\item[(i)'] $\forall x \forall y \quad yx\approx zx$,
\item[(ii)'] $\forall x \forall y \quad yx\approx zx \rightarrow y\approx z$.
\end{enumerate}
We consider all possible cases.
\begin{enumerate}
\item (i) with (i)'. Since $xy\approx xz$ for any $y$ and $z$, we have that there exists a function $f: A\longrightarrow A$ such that $xy=f(x)$ for all $y$. By symmetry, there exists a function $g: A \longrightarrow A$ such that $xy=g(y)$. Then, for all $x,y$, $f(x)=g(y)$. If we fix a $y_0$, then for all $x$, $f(x)=g(y_0)$. Therefore, $f$ is constant, and the groupoid is isomorphic to $\C_n$.
\item (i) with (ii)'. Following the previous notation, we write $xy=f(x)$. The condition $(ii)'$ says that the groupoid is right cancellative. So, if we have that $yx=zx$, then $y=z$. But $yx=zx$ is equivalent to $f(y)=f(z)$. This tells us that $f$ is injective, and therefore bijective. Now it suffices to consider equations of the form
$$\underbrace{(((\cdots )x)x)x}_{m \mbox{ \footnotesize times}}\approx x \iff f^m(x)=x,$$
and by means of the same arguments as in Theorem~\ref{TheoMonoUnary}, we obtain that $f$ is an isocyclic permutation and therefore the groupoid is isomorphic to $\I_n^{a-}$, for a certain $a$.

\item (ii) with (i)'. This is the symmetric case of the previous point and we conclude that the groupoid is isomorphic to
$\I_n^{a+}$.

\item (ii) with (ii)'. The groupoid is left and right cancellative, that is, $\A$ is a quasigroup.
\end{enumerate}
\end{proof}

We now deduce further properties.
Recall that a groupoid with operation $*$ is said to be derived from a groupoid with operation $\cdot$ if both have the same universe and $x*y=t(x,y)$, where $t$ is a term defined by the operation $\cdot$. Suppose the groupoid $\A$ is derived from the groupoid $\B$, of order $n$. If $\B$ has minimal spectrum,
$$\left\{\frac{1}{n},1\right\} \subseteq \PSpec(\A) \subseteq \PSpec (\B) =\left\{\frac{1}{n},1\right\},$$
and therefore $\A$ also has minimal spectrum.
\begin{proposition} \label{PropoProp} Every groupoid $\A$ with minimal spectrum satisfies the following properties.
\begin{enumerate}[(i)]
\item Either it is commutative, or it is anticommutative.
\item Either the diagonal of the operation table is constant, or if the diagonal contains two idempotent elements, then all of $\A$ is idempotent.
\item Every single variable term that is not constant is an isocyclic permutation.
\end{enumerate}
\end{proposition}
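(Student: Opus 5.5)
The plan is to get all three items from Lemma~\ref{LemmaIdentEq}, from direct counting, and from Theorem~\ref{TheoMonoUnary}. No case analysis from Theorem~\ref{TheoClasGen} is needed. Throughout, $n$ is the order of $\A$, and every probability is either $1$ or $1/n$.

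For (i), I would apply Lemma~\ref{LemmaIdentEq} to the equation $xy\approx yx$. Its specialization is
$$\faktor{(xy\approx yx)}{(x\approx y)}=(xx\approx xx),$$
which is an identity. So either $\A\models xy\approx yx$, meaning $\A$ is commutative, or $\A$ satisfies the quasi-identity $xy\approx yx\rightarrow x\approx y$. The latter is exactly anticommutativity: no two distinct elements commute.

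For (ii), I would use the same device with the equation $xx\approx yy$. Its specialization by $x\approx y$ is again trivial. Hence either $xx=yy$ for all $x,y$, so the diagonal is constant, or $xx=yy$ implies $x=y$. For the statement about idempotents, I would count directly: $\Pr(xx\approx x\mid\A)=E/n$, where $E$ is the number of idempotents. Minimality forces $E\in\{1,n\}$. So if there are two idempotents, then $E=n$ and $\A$ is idempotent. This counting alone gives the second alternative; the dichotomy from the lemma simply frames the statement.

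For (iii), let $t(x)$ be a term in one variable and consider the monounary algebra $(A,t)$. Every equation $s\approx s'$ of $(A,t)$ becomes an equation of $\A$ once $t$ is written out in terms of the groupoid operation. The solution sets are the same and the number of variables is the same, so the probabilities agree. This is the derived-algebra inclusion (iv) of the Proposition in Section~\ref{EspectreProbabilistic}, used across signatures, and it gives
$$\left\{\tfrac1n,1\right\}\subseteq\PSpec(A,t)\subseteq\PSpec(\A)=\left\{\tfrac1n,1\right\}.$$
Theorem~\ref{TheoMonoUnary} then says $t$ is either constant or an isocyclic permutation. Here the identity permutation is included as the degenerate isocyclic case, consistent with the $(\Leftarrow)$ part of that proof.

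I expect no real obstacle. The only point needing care is justifying the spectrum inclusion in (iii), since the derived algebra has a different signature from $\A$. I would handle it by the explicit term substitution just described.
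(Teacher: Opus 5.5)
Your proof is correct and matches the paper's argument for (i), via Lemma~\ref{LemmaIdentEq} applied to $xy\approx yx$, and for (iii), via the monounary algebra $(A,t)$ inheriting minimal spectrum followed by Theorem~\ref{TheoMonoUnary}. For (ii) the paper instead deduces the claim from (iii) with $f(x)=xx$; your direct count $\Pr(xx\approx x\mid\A)=E/n$, which forces $E\in\{1,n\}$, is the fixed-point step from the proof of Theorem~\ref{TheoMonoUnary} done inline, and it shows that the idempotent conclusion needs no case split on whether the diagonal is constant.
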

\begin{proof} \begin{enumerate}[(i)]
\item
We start from
$$\faktor{xy \approx yx}{x\approx y}=xy\approx xy.$$
The last identity is always true. Therefore, by Lemma~\ref{LemmaIdentEq}, it is universally satisfied that either $xy\approx yx$ or $xy\approx yx \rightarrow x\approx y$.
\item This follows from (iii). Define $f(x)=xx$. If $f$ is not constant, then $f$ is an isocyclic permutation, and this forces that if there are two idempotent elements, all elements must be idempotent.
\item Consider the monounary algebra $(A,f)$ where $f$ is the operation defined by a term with a single variable. If $\A$ has minimal spectrum, the monounary algebra also has it, and we have already seen that then either $f$ is constant or it is an isocyclic permutation.
\end{enumerate}
\end{proof}

\section{Minimal spectrum semigroups}

We do not know a complete characterization of groupoids with minimal spectrum. However, if we restrict ourselves to semigroups, a complete characterization can be obtained. Recall that for any algebra $\PSpec(\A^m)=\{ \alpha^m \mid \alpha \in \PSpec(\A)\}$. Hence $\A$ has minimal spectrum if and only if $\A^m$ does. We briefly verify this. If $\A$ has minimal spectrum of order $n$, then
$$\PSpec(\A^m)=\{ \alpha^m \mid \alpha \in \{1/n, 1\}\}=\{1, 1/n^m\}.$$
Conversely, if $\A^m$ has minimal spectrum, then so does $\A$.

\begin{lemma} \label{LemmaGrupAb1} Let $a$ and $n$ be integers such that $0<a<n$, with $\gcd(a,n)\neq 1$. Then,
$$\frac{1}{n}< \Pr( ax\approx 0 \mid \mathbb{Z}_n)<1.$$
\end{lemma}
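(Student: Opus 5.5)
The plan is to count the solutions of $ax\approx 0$ in $\mathbb{Z}_n$ directly, where $ax$ means the $a$-fold sum $x+\cdots+x$. The goal is to show that the number of solutions is strictly between $1$ and $n$. Dividing by $n$ then gives the two strict inequalities in the statement.

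First, the solution set $\{x\in\mathbb{Z}_n \mid ax=0\}$ is the kernel of the group endomorphism $x\mapsto ax$ of $\mathbb{Z}_n$. The standard fact is that this kernel is the subgroup generated by $n/d$, where $d=\gcd(a,n)$, so it has exactly $d$ elements. To keep things self-contained I will verify this. The condition $ax\equiv 0 \pmod n$ is equivalent to $(n/d)\mid (a/d)x$. Since $\gcd(a/d,n/d)=1$, this is equivalent to $(n/d)\mid x$. The multiples of $n/d$ in $\{0,\ldots,n-1\}$ are exactly $0, n/d, \ldots, (d-1)n/d$, so there are $d$ of them. Hence
$$\Pr(ax\approx 0\mid \mathbb{Z}_n)=\frac{d}{n}.$$

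Next come the two bounds. By hypothesis $\gcd(a,n)\neq 1$, so $d\ge 2$ and $d/n>1/n$. Since $d$ divides $a$ and $0<a<n$, we get $d\le a<n$, so $d/n<1$. Concretely, $x=1$ is not a solution, because $a\not\equiv 0\pmod n$.

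I do not expect a genuine obstacle. The only point needing care is the count of the kernel, and in particular the use of coprimality after dividing by $d$. A cruder route is also available if preferred: $x=0$ and $x=n/d$ are two distinct solutions, which gives the lower bound, and $x=1$ is a non-solution, which gives the upper bound. This avoids computing the exact count altogether.
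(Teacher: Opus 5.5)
Your proof is correct, and your fallback ``cruder route'' is exactly the paper's proof. The paper shows $1<|\{ax\approx 0\}_{\mathbb{Z}_n}|<n$ in two steps. For the lower bound it exhibits the solution $x=n/d$, which is nonzero because $d\neq 1$. For the upper bound it observes that if every $x$ satisfied $ax=0$, then $a\equiv 0 \pmod n$; this is your remark that $x=1$ is not a solution.

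Your main route computes the kernel exactly and gets $\Pr(ax\approx 0\mid \mathbb{Z}_n)=\gcd(a,n)/n$. It costs one extra step, the cancellation using $\gcd(a/d,n/d)=1$, but it gives more. The same formula also covers the coprime case $\Pr(ax\approx 0\mid\mathbb{Z}_{n_i})=1/n_i$, which the paper uses without proof in Lemma~\ref{LemmaGrupAb2}. So both the $I$ and $J$ factors there follow from your single computation.
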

\begin{proof} It suffices to prove that
$$1 < |\{ ax \approx 0\}_{\mathbb{Z}_n}|<n.$$
First suppose that $|\{ ax \approx 0\}_{\mathbb{Z}_n}|=n$. This is equivalent to $\{ ax \approx 0\}_{\mathbb{Z}_n}=\mathbb{Z}_n$. That is, for any $x\in \mathbb{Z}_n$, $ax=0$. This occurs only when $a=0$, which is not possible by hypothesis.

Now suppose that $|\{ ax \approx 0\}_{\mathbb{Z}_n}|=1$. This is equivalent to $\{ ax \approx 0\}_{\mathbb{Z}_n}=\{0\}$. However, there exists another nontrivial solution. Let $d=\gcd(a,n)$ and let $x=\frac{n}{d}$. Then,
$$ax=a \left(\frac{n}{d}\right)= \left(\frac{a}{d} \right) n \equiv 0 \pmod n.$$
Since $d \neq 1$ by hypothesis, the solution is not trivial.
\end{proof}

\begin{lemma} \label{LemmaGrupAb2} If the group $\mathbb{Z}_{n_1} \oplus \cdots \oplus \mathbb{Z}_{n_s}$ has minimal spectrum, then $n_1=\cdots=n_s$.
\end{lemma}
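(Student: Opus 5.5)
Assume, without loss of generality, that $n_1\le n_2\le \cdots \le n_s$ and that not all the $n_i$ are equal, so that $n_1<n_s$. We exhibit a one-variable equation whose probability lies strictly between $1/N$ and $1$, where $N=n_1\cdots n_s$ is the order of the group. This contradicts minimality. As in the Remark, we work in the additive group; the equation $n_1x\approx 0$ can be written in the semigroup signature as the sum of $n_1+1$ copies of $x$ equal to $x$. It therefore belongs to the spectrum of the group, which coincides with that of its semigroup reduct.

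The probability of $n_1x\approx 0$ in the direct sum factorizes componentwise, as in Proposition (i):
$$\Pr(n_1x\approx 0\mid \mathbb{Z}_{n_1}\oplus\cdots\oplus\mathbb{Z}_{n_s})=\prod_{i=1}^s \Pr(n_1x\approx 0\mid \mathbb{Z}_{n_i}).$$
Each factor equals $\gcd(n_1,n_i)/n_i$, since the number of solutions of $ax\approx 0$ in $\mathbb{Z}_m$ is $\gcd(a,m)$. Equivalently, the factor is $1$ when $n_i\mid n_1$ and $n_1/n_i$ when $n_1\mid n_i$; in general it is at least $1/n_i$.

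\textbf{Upper bound.} Consider the factor with $i=s$. Since $0<n_1<n_s$, it is strictly less than $1$. If $\gcd(n_1,n_s)\neq 1$, this is exactly Lemma~\ref{LemmaGrupAb1}. If $\gcd(n_1,n_s)=1$, the factor is $1/n_s<1$ directly. All other factors are at most $1$, so the whole product is $<1$.

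\textbf{Lower bound.} The factor with $i=1$ equals $1$, because $n_1x=0$ for every $x\in\mathbb{Z}_{n_1}$. Every other factor is at least $1/n_i$. Hence the product is at least
$$\prod_{i\ge 2}\frac{1}{n_i}=\frac{n_1}{N}>\frac{1}{N},$$
provided $n_1>1$.

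The only delicate point is the degenerate case $n_1=1$, which is why we treat it separately. Trivial summands $\mathbb{Z}_1$ do not change the group, so we discard them beforehand and assume every $n_i\ge 2$. If we then apply the argument above to the remaining summands and they are not all equal, we obtain $1/N<\Pr<1$. This is the desired contradiction, so $n_1=\cdots=n_s$.
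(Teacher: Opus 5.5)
Your proof is correct and follows essentially the same route as the paper: both take $n_1$ to be the smallest order, factor the probability of $n_1x\approx 0$ over the cyclic summands, and trap it strictly between $1/N$ and $1$. Your bookkeeping is cleaner than the paper's $I/J$ partition, since you get strictness from one factor on each side (the $i=1$ factor equal to $1$, the $i=s$ factor below $1$). Your explicit discarding of trivial summands also states the convention $n_i\ge 2$ that the paper assumes and that the lemma needs (e.g.\ $\mathbb{Z}_1\oplus\mathbb{Z}_2\cong\mathbb{Z}_2$).
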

\begin{proof} Since the groups considered here are finite, $n_1, \ldots, n_s>1$, and at least two of them are distinct. Without loss of generality, we can assume that
$$n_1=\cdots =n_r < n_{r+1} \leq \cdots \leq n_s,$$
for some $r<s$. Let $a=n_1$ and let $P$ be the probability
$$P=\Pr(ax\approx 0 \mid \mathbb{Z}_{n_1} \oplus \cdots \oplus \mathbb{Z}_{n_s})=\prod_{i=1}^s \Pr(ax\approx 0 \mid \mathbb{Z}_{n_i}).$$
Since $a=n_1$, $\Pr(ax\approx 0 \mid \mathbb{Z}_{n_i})=1$, for all $i=1, \ldots, r$. We can  rewrite $P$ as
$$P=\prod_{i=r+1}^s \Pr(ax\approx 0 \mid \mathbb{Z}_{n_i}).$$
We now consider the following partition:
\begin{align*}
I&=\{ i \in \{r+1, \ldots, s\} \mid \gcd(a,n_i)=1\}, \\ J&=\{ j \in \{r+1, \ldots, s\} \mid \gcd(a,n_j)\neq 1\}.
\end{align*}
Note that since we are assuming $r<s$, $I\cup J\not=\emptyset$. Therefore,
$$P=\prod_{i \in I} \Pr(ax\approx 0 \mid \mathbb{Z}_{n_i}) \cdot \prod_{j \in J} \Pr(ax\approx 0 \mid \mathbb{Z}_{n_j}).$$
The first product is easy to calculate:
$$P=\prod_{i \in I} \frac{1}{n_i} \cdot \prod_{j \in J} \Pr(ax\approx 0 \mid \mathbb{Z}_{n_j}).$$
For the second product we use the previous Lemma~\ref{LemmaGrupAb1} for each $j$, that is,
$$\frac{1}{n_j}< \Pr( ax\approx 0 \mid \mathbb{Z}_{n_j})<1.$$
Applying this to $P$ yields
$$\prod_{i \in I} \frac{1}{n_i} \prod_{j\in J} \frac{1}{n_j}<P<\prod_{i \in I} \frac{1}{n_i}.$$
Finally, if $r<s$, observe that
$$\frac{1}{n_1\cdots n_s}< \prod_{i=r+1}^s \frac{1}{n_i}=\prod_{i \in I} \frac{1}{n_i} \prod_{j \in J} \frac{1}{n_j}<P<\prod_{i \in I} \frac{1}{n_i}<1,$$
where the first inequality arises from the fact that $I\cup J\not=\emptyset$. But these inequalities show that $\mathbb{Z}_{n_1} \oplus \cdots \oplus \mathbb{Z}_{n_s}$ is not minimal.
\end{proof}

\begin{lemma}\label{LemmaGrupAb3} If $p$ is a prime number,
$$\PSpec(\mathbb{Z}_{p^r})=\left\{ p^{s-r} \mid 0\leq s\leq r \right\}.$$
\end{lemma}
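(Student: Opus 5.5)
The plan is to reduce every equation in $\mathbb{Z}_{p^r}$ to a single homogeneous linear congruence and count its solutions as the kernel of a group homomorphism. First, by the Remark following the basic Proposition, it makes no difference whether we use the group signature or only the semigroup reduct $(\mathbb{Z}_{p^r},+)$. In additive notation, every term in $k$ variables evaluates to a linear form $a_1x_1+\cdots+a_kx_k$ with nonnegative integer coefficients. Hence an equation $t\approx t'$ becomes
$$c_1x_1+\cdots+c_kx_k\equiv 0 \pmod{p^r}$$
for some integers $c_i$, namely the differences of the corresponding coefficients of $t$ and $t'$.

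Next, consider the homomorphism $\varphi:\mathbb{Z}_{p^r}^k\to\mathbb{Z}_{p^r}$ given by $\vec{x}\mapsto\sum c_ix_i$. Its image is the subgroup generated by the $c_i$ modulo $p^r$, which is the cyclic subgroup generated by $d=\gcd(c_1,\ldots,c_k,p^r)$. Since every divisor of $p^r$ has the form $p^s$, we get $d=p^s$ for some $0\le s\le r$. The image $\langle p^s\rangle$ then has order $p^{r-s}$. By the first isomorphism theorem the solution set $\ker\varphi$ has size $p^{rk}/p^{r-s}$, so
$$\Pr(t\approx t'\mid\mathbb{Z}_{p^r})=p^{s-r}.$$
This proves the inclusion $\PSpec(\mathbb{Z}_{p^r})\subseteq\{p^{s-r}\mid 0\le s\le r\}$. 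The degenerate case where all $c_i\equiv 0$ gives $d=p^r$, that is $s=r$ and probability $1$, so it fits the same formula.

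For the reverse inclusion I would exhibit explicit semigroup equations. For $s=0$, take $x\approx y$, whose probability is $p^{-r}$. For $1\le s\le r$, take the one-variable equation whose left side is $x$ summed $p^s+1$ times and whose right side is $x$. This is equivalent to $p^sx\equiv 0$, which has exactly $p^s$ solutions, so its probability is $p^{s-r}$.

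The only step needing real care is the claim that the image of $\varphi$ is generated by $\gcd(c_1,\ldots,c_k,p^r)$, together with the fact that it is a power of $p$. This follows from Bézout's identity in $\mathbb{Z}$ and from the subgroups of the cyclic group $\mathbb{Z}_{p^r}$ forming a chain $\langle p^s\rangle$. Once this is in place, the rest is routine counting. As a consequence, for $r=1$ we recover $\PSpec(\mathbb{Z}_p)=\{1/p,1\}$, as claimed in the Example.
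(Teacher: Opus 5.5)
Your proof is correct and reaches the same count $p^{s+r(k-1)}$ as the paper, but it gets there by a different mechanism. The paper factors $d=\gcd(a_1,\ldots,a_k)$ as $d'p^s$ and cancels the unit $d'$. It then splits the solution set of $p^s\sum b_ix_i\equiv 0$ into the $p^s$ affine congruences $\sum b_ix_i\equiv jp^{r-s}$, and counts each one by solving for a variable whose coefficient is a unit. In effect it checks by hand that every nonempty fiber of $\vec{x}\mapsto\sum b_ix_i$ has size $p^{r(k-1)}$. You replace this with $|\ker\varphi|=p^{rk}/|\mathrm{im}\,\varphi|$, and you identify the image as $\langle\gcd(c_1,\ldots,c_k,p^r)\rangle$ via B\'ezout.

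The paper's route describes the solution set explicitly. Yours is shorter and gains three things:
\begin{itemize}
\item Including $p^r$ in the gcd handles the degenerate cases uniformly. For instance, when $p^{r+1}$ divides every coefficient, no factorization $d=d'p^s$ with $\gcd(d',p^r)=1$ exists, so the paper's step does not literally apply.
\item You actually prove the reverse inclusion, using the witnesses $x\approx y$ and $(p^s+1)x\approx x$. The paper leaves this implicit; its last sentence only gives $\subseteq$.
\item The same computation gives $\Pr(\sum c_ix_i\approx 0\mid\mathbb{Z}_n)=\gcd(c_1,\ldots,c_k,n)/n$ for arbitrary $n$, and Lemma~\ref{LemmaGrupAb1} is a special case of this formula.
\end{itemize}
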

\begin{proof} Consider the nontrivial equation $\sum_{i=1}^k a_ix_i =0$ and let $d=\gcd(a_1, \ldots, a_k)$. Setting $b_i=a_i/d$, we obtain the equation
$$d \sum_{i=1}^k b_ix_i \equiv 0 \pmod {p^r}.$$
Since $p$ is prime, $\gcd(d, p^r)=p^s$, for some $0\leq s \leq r$. Thus, $d=d' p^s$, where $\gcd(d', p^s)=\gcd(d',p^r)=1$. Hence we may cancel the factor $d'$, and the equation is equivalent to
\[p^s \sum_{i=1}^k b_ix_i \equiv 0 \pmod {p^r}. \tag{$\ast$}\]
Now consider the equation $p^s y \equiv 0 \mod p^r$. It has $p^s$ solutions, which are of the form $y=jp^{r-s}$, for $j=1, ..., p^s$. Therefore, the solution set of $(\ast)$ decomposes into the disjoint union of solutions of $p^s$; one equation for each $j$:
\[\sum_{i=1}^k b_ix_i \equiv jp^{r-s} \pmod {p^r}. \tag{$\ast\ast$} \]
Observe that there exists some $b_i$ such that $\gcd(b_i, p^r)=1$. Without loss of generality, assume that $\gcd(b_k, p^r)=1$. Therefore, $b_k$ is an invertible element in the ring $\mathbb{Z}_{p^r}$. The solution set of ($\ast\ast$) is given by
$$\left\{ (x_1, \ldots, x_{k-1}, b_k^{-1} \left(-jp^{r-s}+\sum_{i=1}^{k-1}b_ix_i \right) \midd x_1, \ldots , x_{k-1}\in \mathbb{Z}_{p^r}\right\},$$
which has cardinality equal to $p^{r(k-1)}$. Since $j$ ranges from $j=1, \ldots, p^s$, the number of solutions of ($\ast$) is $p^s \cdot p^{r(k-1)}=p^{s+r(k-1)}$, and dividing by $p^{rk}$ we obtain the probability
$$\Pr\left(\sum_{i=1}^k a_ix_i \approx 0 \midd \mathbb{Z}_{p^r}\right)=p^{s-r}.$$
Finally, since $0\leq s \leq r$, the result follows.
\end{proof}

\begin{theorem} The semigroups of order $n$ with minimal spectrum are exactly $\C_n$, $\I_n^{1+}$, $\I_n^{1-}$ and $(\mathbb{Z}_p)^m$ with $p$ prime and $n=p^m$.
\end{theorem}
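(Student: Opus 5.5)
Both directions are needed. For the forward direction I would start from Theorem~\ref{TheoClasGen}: a semigroup with minimal spectrum is $\C_n$, some $\I_n^{a\pm}$, or a quasigroup.

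In the isocyclic case, say $xy=f(x)$, associativity gives
\[
(xy)z=f(f(x)) \quad\text{and}\quad x(yz)=f(x),
\]
so $f^2=f$. Since $f$ is a permutation, $f=\id$. Hence $a=1$ and the semigroup is $\I_n^{1-}$, the left projection; the right case gives $\I_n^{1+}$ symmetrically.

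In the quasigroup case, an associative finite quasigroup is a group $\G$. By the Remark, the spectrum of the semigroup reduct equals that of the group, so I may use group equations freely. I would then show $\G$ is elementary abelian in four steps.

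\begin{enumerate}[(1)]
\item \emph{Abelian.} By Proposition~\ref{PropoProp}(i), $\G$ is either commutative or anticommutative. In a group, anticommutativity means $xy=yx$ forces $x=y$. But $x$ and $x^2$ commute, so $x^2=x$ for every $x$, i.e.\ the group is trivial. Hence $\G$ is abelian.
\item \emph{Reduction to equal invariants.} Write $\G\cong\mathbb{Z}_{n_1}\oplus\cdots\oplus\mathbb{Z}_{n_s}$. Lemma~\ref{LemmaGrupAb2} gives $n_1=\cdots=n_s=q$, so $\G\cong(\mathbb{Z}_q)^s$.
\item \emph{Cyclic factor.} By Proposition~(iii) of Section~2, $\PSpec((\mathbb{Z}_q)^s)=\{\alpha^s\mid\alpha\in\PSpec(\mathbb{Z}_q)\}$. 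So minimality of $(\mathbb{Z}_q)^s$ forces minimality of $\mathbb{Z}_q$.
\item \emph{$q$ is prime.} If $q$ is not prime, choose a divisor $1<a<q$. Then $\gcd(a,q)\ne1$, and Lemma~\ref{LemmaGrupAb1} places $\Pr(ax\approx0\mid\mathbb{Z}_q)$ strictly between $1/q$ and $1$, a contradiction. So $q=p$ is prime and $n=p^m$.
\end{enumerate}

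For the reverse direction:
\begin{itemize}
\item $\C_n$ has minimal spectrum because every nonvariable term evaluates to the constant. An equation therefore has probability $1$ unless it compares a variable with something. It has probability $1/n$ when it compares a variable with a constant term or with another variable.
\item For $\I_n^{1-}$, every term equals its leftmost variable, so every equation is $x\approx x$ or $x\approx y$. The same holds for $\I_n^{1+}$ by anti-isomorphism.
\item For $(\mathbb{Z}_p)^m$, Lemma~\ref{LemmaGrupAb3} with $r=1$ gives $\PSpec(\mathbb{Z}_p)=\{1/p,1\}$. By the Remark, semigroup equations reduce to linear equations $\sum a_ix_i\approx0$. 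Proposition~(iii) then yields $\{1,1/p^m\}=\{1,1/n\}$.
\end{itemize}

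The main obstacle I expect is step (1): correctly ruling out the anticommutative alternative. I would pin this down using the pair $x,x^2$ as above; the trivial group is harmless, being $\C_1$. A secondary subtlety is in step (3), namely passing from minimality of the power $(\mathbb{Z}_q)^s$ to minimality of $\mathbb{Z}_q$. Here the equality in Proposition~(iii) suffices, since $\alpha\mapsto\alpha^s$ is injective on $[0,1]$.
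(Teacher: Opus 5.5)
Your proof is correct and follows the paper's argument. You use Theorem~\ref{TheoClasGen}, the identity $f^2=f$ in the isocyclic case, commutativity via Proposition~\ref{PropoProp}, Lemma~\ref{LemmaGrupAb2}, and the reduction from the power to the cyclic factor. The deviations are minor: you rule out a composite $q$ directly with Lemma~\ref{LemmaGrupAb1} instead of going through the primary decomposition and Lemma~\ref{LemmaGrupAb3}, and you explicitly check the converse for $\C_n$ and $\I_n^{1\pm}$ and cite the Remark to justify using group equations, both of which the paper leaves implicit.
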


\begin{proof} By Theorem~\ref{TheoClasGen}, either the semigroup is constant, or it is isocyclic, or it is a quasigroup. In the first case, we are already done. Suppose we have a left isocyclic groupoid (the right case is symmetric). If the operation $x*y=f(x)$ is associative, then $x(yz)=xf(y)=f(x)$ must be equal to $(xy)z=f(x)z=f^2(x)$. However, the identity is the only permutation $f$ such that $f^2=f$. Thus, the only left isocyclic semigroups are $\I_n^{1-}$ and $\I_n^{1+}$ for the right case.

We now consider the case of quasigroups. We know from elementary theory that every finite cancellative semigroup is a group, see \cite[pg. 61]{Howie1995}. We thus consider a group $\mathcal{G}$ with minimal spectrum. By Proposition~\ref{PropoProp}, it is either commutative or anticommutative. However, no nontrivial group can be anticommutative. For example, the pair of elements $1$ and $g\not=1$ always commute. Therefore, we consider only abelian groups. By the classification theorem of abelian groups, there exist prime numbers $p_1, \ldots, p_m$ and positive exponents $r_1, \ldots, r_m$ such that
$$\mathcal{G}\cong \mathbb{Z}_{{p_1}^{r_1}} \oplus \cdots \oplus \mathbb{Z}_{{p_m}^{r_m}}.$$
However, by Lemma~\ref{LemmaGrupAb2}, if $\mathcal{G}$ is minimal, $p_1^{r_1}=\cdots =p_s^{r_s}=p^r$ and therefore
$$\mathcal{G}\cong \left( \mathbb{Z}_{{p}^{r}} \right)^m.$$
The power of an algebra has minimal spectrum if and only if the algebra has minimal spectrum. It remains to analyze the case $\mathbb{Z}_{p^r}$. By Lemma~\ref{LemmaGrupAb3}, this group has minimal spectrum if and only if $r=1$.
\end{proof}

\section{Weak associativity and other relaxed identities}
Since associativity forces every quasigroup to be a group, a case we have already studied, we now consider weaker forms of associativity. In the theory of quasigroups (see \cite{Pflugfelder1990, Bruck1971}), the following identities are important specializations of associativity.
\begin{align*}
x(xy)\approx (xx)y & \qquad \mbox{(left-alternative law)}\\
(yx)x\approx y(xx) & \qquad \mbox{(right-alternative law)}\\
x(yx)\approx (xy)x &\qquad \mbox{(flexible law)}
\end{align*}
We refer to these identities as \emph{weak associativity laws}. Under commutativity, the left and right alternative identities are equivalent. A groupoid is said to be \emph{alternative} if it is left and right alternative, and \emph{$2/3$-associative} if it satisfies any two of the three laws.

\begin{theorem} \label{TheoLoop} If a loop has minimal spectrum, then it is a group. 
\end{theorem}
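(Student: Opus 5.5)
The plan is to bypass Lemma~\ref{LemmaIdentEq} and use minimality directly: count solutions of the associativity equation and show its probability is strictly larger than $1/n$. Let $\A=(A,\cdot)$ be a loop of order $n$ with identity element $e$. The case $n=1$ is trivial, so assume $n\geq 2$. Consider the equation
$$(xy)z\approx x(yz)$$
in three variables. Its probability is either $1/n$ or $1$, because $\A$ has minimal spectrum.

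The key observation is that every triple with at least one coordinate equal to $e$ is a solution:
\begin{itemize}
\item if $x=e$, both sides equal $yz$;
\item if $y=e$, both sides equal $xz$;
\item if $z=e$, both sides equal $xy$.
\end{itemize}
By inclusion--exclusion, or simply by complementing the triples with no coordinate equal to $e$, the number of such triples is
$$n^3-(n-1)^3=3n^2-3n+1.$$
Hence
$$\Pr\bigl((xy)z\approx x(yz)\mid \A\bigr)\geq \frac{3n^2-3n+1}{n^3}.$$
The right-hand side exceeds $1/n$ exactly when $3n^2-3n+1>n^2$, that is, when $(2n-1)(n-1)>0$. This holds for every $n\geq 2$.

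By the minimality of the spectrum, the probability must therefore equal $1$. So $\A$ satisfies associativity identically. A loop is by definition a quasigroup with a two-sided identity. An associative quasigroup with identity is a group, since each element has left and right inverses by the quasigroup property, and associativity makes them coincide. Alternatively, one can cite that a finite cancellative semigroup is a group, as in the previous section.

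I do not expect a real obstacle. The only points needing care are the elementary counting inequality and the fact that the identity element makes all boundary triples solutions. It is this second fact that supplies more than the $n^2$ solutions needed to force probability $1$.
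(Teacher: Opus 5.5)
Your proposal is correct and matches the paper's proof: the paper also bounds the number of associative triples below by those having some coordinate equal to the identity, obtains $3n^2-3n+1$ (via inclusion--exclusion over three sets rather than your complement count $n^3-(n-1)^3$), and uses minimality to force probability $1$. Your explicit justification that an associative loop is a group is a small addition that the paper leaves implicit.
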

\begin{proof} Let $\Gamma$ denote the set of triples for which associativity holds, 
$$\Gamma=\{ (xy)z\approx x(yz) \}_\G= \{ (x,y,z) \in G^3 \mid  (xy)z= x(yz) \}.$$
We assume that $n>1$, since the case $n=1$ is trivial.
We give a lower bound on the number of associative triples. If $\G$ is a loop, there is a neutral element $1$. Define three sets,
$$\Gamma_1=\{(1,y,z) \}, \quad \Gamma_2=\{(x,1,z)\}, \quad \Gamma_3=\{(x,y,1) \},$$
 where $x,y,z \in \G$.
Note that $\Gamma_1\cup \Gamma_2\cup \Gamma_3 \subseteq \Gamma$. 
We compute the cardinality using the inclusion–exclusion principle: 
\begin{align*}
|\Gamma_1\cup \Gamma_2\cup \Gamma_3|&=|\Gamma_1|+|\Gamma_2|+|\Gamma_3|\\
&-(|\Gamma_1\cap \Gamma_2|+|\Gamma_2\cap \Gamma_3|+|\Gamma_1\cap \Gamma_3|)+|\Gamma_1\cap \Gamma_2\cap \Gamma_3|\\
&=n^2+n^2+n^2-(n+n+n)+1=3n^2-3n+1
\end{align*}
We then have that, if $n>1$,
$$\Pr((xy)z\approx x(yz)\mid \G)=\frac{|\Gamma|}{n^3}\geq \frac{|\Gamma_1\cup \Gamma_2\cup \Gamma_3|}{n^3}=\frac{3n^2-3n+1}{n^3}>\frac{1}{n}, $$
since $3n^2-3n+1>n^2$ for $n>1$.
Since $\G$ has minimal spectrum, necessarily $\Pr((xy)z\approx x(yz)\mid \G)=1$ and $\G$ is associative.
\end{proof}

\begin{theorem} If a quasigroup with minimal spectrum is $2/3$-associative, then it is a group.
\end{theorem}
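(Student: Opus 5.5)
The plan is to apply Lemma~\ref{LemmaIdentEq} directly to the associativity equation $x(yz)\approx (xy)z$. We do not try to produce an identity element and then invoke Theorem~\ref{TheoLoop}. The key observation is that each of the three weak associativity laws is a specialization of associativity:
\begin{align*}
x(xz)\approx (xx)z &\quad\text{is}\quad \faktor{x(yz)\approx (xy)z}{x\approx y}\quad\text{(left-alternative)},\\
x(yy)\approx (xy)y &\quad\text{is}\quad \faktor{x(yz)\approx (xy)z}{y\approx z}\quad\text{(right-alternative)},\\
x(yx)\approx (xy)x &\quad\text{is}\quad \faktor{x(yz)\approx (xy)z}{x\approx z}\quad\text{(flexible)}.
\end{align*}
We may assume $n>1$, since the case $n=1$ is trivial.

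Suppose $\A$ is a quasigroup with minimal spectrum satisfying two of these laws, and suppose it is not associative. By Lemma~\ref{LemmaIdentEq}, each law that holds yields a quasi-identity. Left-alternativity gives $x(yz)\approx(xy)z\rightarrow x\approx y$. Right-alternativity gives $x(yz)\approx(xy)z\rightarrow y\approx z$. Flexibility gives $x(yz)\approx(xy)z\rightarrow x\approx z$. So every associative triple $(x,y,z)$ must satisfy the coordinate equalities attached to both laws. For any two of the laws, these equalities together force $x=y=z$.

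On the other hand, each law supplies associative triples that are not constant. Left-alternativity makes every triple $(a,a,b)$ associative. Right-alternativity does the same for $(b,a,a)$, and flexibility for $(a,b,a)$. Since $n>1$, we can choose $a\neq b$, and such a triple contradicts the conclusion of the previous paragraph. We run through the three pairs explicitly.
\begin{itemize}
\item Left plus right: the left law forces $x=y$ on associative triples, while right-alternativity makes $(b,a,a)$ associative with $b\neq a$.
\item Left plus flexible: the flexible law forces $x=z$, while left-alternativity makes $(a,a,b)$ associative.
\item Right plus flexible: the flexible law forces $x=z$, while right-alternativity makes $(b,a,a)$ associative.
\end{itemize}
Hence the first alternative of Lemma~\ref{LemmaIdentEq} must hold, and $\A$ is associative.

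An associative quasigroup is a finite cancellative semigroup, and hence a group, as in the proof of the classification of minimal spectrum semigroups. I expect no real obstacle. The only point needing care is that the correct variable identification is matched to each law, so that the quasi-identities contradict one another as claimed.
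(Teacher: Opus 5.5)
Your proof is correct. It differs from the paper's only in how the key inequality is packaged.

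The paper does not invoke Lemma~\ref{LemmaIdentEq}. It counts associative triples directly: $\{(x,x,z)\}\cup\{(x,y,y)\}$ (or $\{(x,x,z)\}\cup\{(x,y,x)\}$ in the flexible case) consists of associative triples and has $2n^2-n$ elements by inclusion--exclusion. Hence $\Pr\bigl((xy)z\approx x(yz)\mid\A\bigr)\ge (2n^2-n)/n^3>1/n$ for $n>1$, which forces probability $1$. This is the same computation the paper later abstracts as Lemma~\ref{Lemma2Eq}.

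You instead feed one law into Lemma~\ref{LemmaIdentEq} and use the second law only to produce a single associative triple off the relevant diagonal. The underlying mechanism is the same: a solution set strictly larger than a diagonal of size $n^{k-1}$ has probability exceeding $1/n$.

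Your version needs less input and avoids the inclusion--exclusion bookkeeping. It shows that a minimal-spectrum groupoid satisfying just one weak associativity law is associative as soon as one associative triple violates the corresponding coordinate equality. The paper's version gives an explicit quantitative lower bound and matches the proofs of Theorem~\ref{TheoLoop} and Lemma~\ref{Lemma2Eq}.

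In both arguments the quasigroup hypothesis is used only at the end, to pass from an associative quasigroup to a group.
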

\begin{proof}  Suppose first that $\G$ is an alternative groupoid. We count associative triples in $\Gamma$ as in  the proof of Theorem~\ref{TheoLoop}. Define the sets
$$\Gamma_1=\{(x,x,z) \}, \quad \Gamma_2=\{(x,y,y) \},$$
 where $x,y,z \in \G$. The triples in $\Gamma_1$ satisfy the left alternative identity, while the triples in $\Gamma_2$ satisfy the right alternative identity. Therefore, $\Gamma_1 \cup \Gamma_2 \subseteq \Gamma$. We compute the cardinality:
\begin{align*}
|\Gamma_1\cup \Gamma_2|=|\Gamma_1|+|\Gamma_2|-|\Gamma_1\cap \Gamma_2|
=n^2+n^2-n=2n^2-n.
\end{align*}
If $n>1$,
$$\Pr((xy)z\approx x(yz)\mid \G)=\frac{|\Gamma|}{n^3}\geq \frac{|\Gamma_1\cup \Gamma_2|}{n^3}=\frac{2n^2-n}{n^3}>\frac{1}{n}.$$
Hence associativity follows.

Next we show that if $\G$ is left alternative and flexible, then it is associative. We define the sets
$$\Gamma_1=\{(x,x,z) \}, \quad \Gamma_2=\{(x,y,x) \}.$$
The first set contains associative triples that satisfy left alternativity and the second contains associative triples that satisfy flexibility. The enumeration yields the same bound as above. The case of right alternativity plus flexibility is symmetric.
\end{proof}

We conclude with a general structural lemma extending the previous arguments.
 We will say that two equations are equivalent in $\A$ if the set of their solutions is the same. We will say that two equations $t_1\approx t_1'$ and $t_2\approx t_2'$ are \emph{incomparable} in $\A$ if the sets of their solutions are not comparable (neither contains the other).

\begin{lemma} \label{Lemma2Eq} Let $\A$ be an algebra with minimal spectrum and let $t\approx t'$ be an equation. If $\A$ satisfies the specializations,
$$\faktor{t\approx t'}{x\approx y} \qquad \faktor{t\approx t'}{x'\approx y'},$$
and these are incomparable, then $\A$ satisfies the identity $t\approx t'$.
\end{lemma}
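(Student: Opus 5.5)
The plan is to reduce the statement to Lemma~\ref{LemmaIdentEq}, or equivalently to a direct count of solutions in the spirit of the proofs of Theorem~\ref{TheoLoop} and the $2/3$-associativity theorem. Let $k$ be the number of variables of $t\approx t'$ and $n=|A|$. The case $n=1$ is trivial, so assume $n>1$.

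I interpret the hypotheses as follows. The identities $\faktor{t\approx t'}{x\approx y}$ and $\faktor{t\approx t'}{x'\approx y'}$ say that the solution set $\{t\approx t'\}_\A\subseteq A^k$ contains both diagonals
$$D=\{\vec a\in A^k \mid a_x=a_y\},\qquad D'=\{\vec a\in A^k\mid a_{x'}=a_{y'}\}.$$
Incomparability of the two specializations means that neither of these solution sets contains the other. In particular $D\neq D'$, which amounts to $\{x,y\}\neq\{x',y'\}$ as pairs of variables.

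\textbf{Route via Lemma~\ref{LemmaIdentEq}.} Apply the lemma to the specialization along $x\approx y$. Either $\A\models t\approx t'$, and we are done, or $\A$ satisfies the quasi-identity $t\approx t'\rightarrow x\approx y$. To exclude the second alternative, exhibit a tuple in $D'\setminus D$; it solves $t\approx t'$ but has $a_x\neq a_y$. Since $\{x,y\}\neq\{x',y'\}$, at least one of $x,y$, say $x$, is different from both $x'$ and $y'$ or else is constrained independently of $y$. So we may assign $a_{x'}=a_{y'}$ arbitrarily and still choose $a_x\neq a_y$, using $n>1$.

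\textbf{Counting route (backup).} As in Theorem~\ref{TheoLoop}, use inclusion--exclusion:
$$|D\cup D'|=2n^{k-1}-|D\cap D'|.$$
Whether the two pairs share a variable or are disjoint, $D\cap D'$ imposes two independent equalities, so $|D\cap D'|=n^{k-2}$. Hence $|\{t\approx t'\}_\A|\ge 2n^{k-1}-n^{k-2}>n^{k-1}$. This gives $\Pr(t\approx t'\mid\A)>1/n$, so by minimality $\Pr(t\approx t'\mid\A)=1$.

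\textbf{Main obstacle.} The main difficulty is interpretive rather than computational: pinning down what ``incomparable'' means for two specializations that are both identities. Their solution sets live in different copies of $A^{k-1}$, indexed by different variables. I will make precise that it should be read through their embeddings $D,D'$ into $A^k$, so that incomparability is exactly $D\not\subseteq D'$ and $D'\not\subseteq D$. Once that is fixed, the existence of the separating tuple, or the strict count, is routine.
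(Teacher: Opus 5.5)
Your proposal is correct. Your backup counting route is exactly the paper's proof. The paper takes $\Gamma_1=\{x=y\}$ and $\Gamma_2=\{x'=y'\}$ in $A^k$ and notes that both lie in $\{t\approx t'\}_\A$. It uses incomparability only to obtain $\{x,y\}\neq\{x',y'\}$, hence $|\Gamma_1\cap\Gamma_2|=n^{k-2}$, and concludes $\Pr(t\approx t'\mid\A)\ge 2/n-1/n^2>1/n$.

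Your primary route through Lemma~\ref{LemmaIdentEq} is a genuinely different and equally valid argument. It does not need the exact size of $D\cap D'$, only a single tuple in $D'\setminus D$. It also shows that the present lemma is really a corollary of Lemma~\ref{LemmaIdentEq}: the second specialization serves only to supply one solution of $t\approx t'$ with $a_x\neq a_y$. What the paper's direct count buys in exchange is that it is self-contained and quantitative. It gives the explicit bound $2/n-1/n^2$ and follows the same inclusion--exclusion template as the loop and $2/3$-associativity theorems.

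Two small remarks. First, your clause ``or else is constrained independently of $y$'' is unnecessary. Two distinct two-element sets $\{x,y\}\neq\{x',y'\}$ always leave one of $x,y$ outside $\{x',y'\}$, and under your reading $D'\not\subseteq D$ is literally the hypothesis anyway. Second, your interpretive point is well taken. Read literally, two identities in $k-1$ variables both have solution set $A^{k-1}$ and so are never incomparable. Reading incomparability through $D,D'\subseteq A^k$, i.e.\ as $\{x,y\}\neq\{x',y'\}$, is exactly how the paper's proof actually uses the hypothesis.
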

\begin{proof} Let $x,y,x',y'$ be variables of the equation $t\approx t'$. We define the sets
$$\Gamma_1=\{ \vec{z}\in A^k \mid x=y \} \quad \mbox{ and } \quad \Gamma_2=\{ \vec{z}\in A^k \mid x'=y' \}.$$
By hypothesis, $\A$ satisfies both specializations. This means that if we identify the variables $x=y$ inside the $k$-tuple $\vec{z}$, then $t(\vec{z})=t'(\vec{z})$. Therefore, $\Gamma_1 \subseteq \{ t\approx t'\}_\A$. And the same holds for the second specialization. Furthermore, $|\Gamma_1|=|\Gamma_2|= n^{k-1}$.
On the other hand, if the equations are incomparable, the two identifications correspond to pairs of different variables, $\{x,y\}\neq \{x',y'\}$, impose two independent restrictions, and we have that
$|\Gamma_1 \cap \Gamma_2| = n^{k-2}$.
Therefore,
$$|\Gamma_1 \cup \Gamma_2| =|\Gamma_1|+|\Gamma_2|-|\Gamma_1\cap \Gamma_2|= 2n^{k-1}-n^{k-2}$$
Finally, since $\Gamma_1 \cup \Gamma_2 \subseteq \{ t\approx t'\}_\A$, dividing by $n^k$ we have
$$\Pr(t \approx t'\mid \A)=\frac{|\{t\approx t'\}_{\A}| }{n^k}\geq \frac{|\Gamma_1 \cup \Gamma_2|}{n^k} \geq \frac{2n^{k-1}-n^{k-2}}{n^k}=\frac{2}{n}-\frac{1}{n^2}>\frac{1}{n}.$$
Since $\A$ has minimal spectrum, $\Pr(t \approx t'\mid \A)=1$.
\end{proof}

Let us see some applications of this Lemma. First, note that the incomparability condition is necessary. An example is the medial identity
$$(xy)(uv) \approx (xu)(yv).$$
When we identify any two variables, we always obtain the same equivalent identity:
$$(xx)(uv)\approx (xu)(xv).$$
Although the syntactic expression may vary, the set of solutions is always the same. Thus, the lemma does not apply to the medial identity. 
 Nonetheless, we saw that the associative identity gives three different and incomparable specializations.
 As another example, recall that a loop satisfying the Moufang identity is a group. Consider a quasigroup that is Moufang without having a neutral element. The left Moufang identity is
$$x(y(xz))\approx((xy)x)z,$$
which gives rise to three possible incomparable specializations,
$$x(x(xz))\approx ((xx)x)z, \qquad x(y(xy))\approx ((xy)x)y, \qquad x(y(xx))\approx ((xy)x)x.$$
By applying Lemma~\ref{Lemma2Eq} we obtain that if a groupoid with minimal spectrum satisfies two left Moufang specializations, then it is left Moufang.

These results show that minimality of the spectrum imposes strong rigidity on algebraic structures. In particular, it yields a complete classification in the case of semigroups, and forces loops and weak associativity conditions to collapse into associativity, and hence into group.
It remains an open question whether minimal spectrum is sufficient to classify finite quasigroups up to isomorphism.

\section*{Declarations}

\begin{itemize}
\item Funding  
Not applicable.

\item Conflict of interest/Competing interests.
The author declares that he has no conflict of interest.

\item Ethics approval and consent to participate.
Not applicable.

\item Consent for publication.
Not applicable.

\item Data availability.
No data were generated or analyzed in this study.

\item Materials availability. 
Not applicable.

\item Code availability.
Not applicable.

\item Author contribution.
The author is solely responsible for the content of this article.
\end{itemize}

\bibliography{BibPspec}

\end{document}